\newtheorem*{mainthm}{Theorem}
\newtheorem*{mainproblem}{Problem}
\renewcommand\End{\operatorname{End}}
\newcommand\Aff{\operatorname{Aff}}
\begin{document}
\title{On Gardam's and Murray's units in group rings}
\author{Laurent Bartholdi}
\address{Fachrichtung Mathematik, Universität des Saarlandes}
\email{laurent.bartholdi@gmail.com}
\date{December 15th, 2022}
\begin{abstract}
  We show that the units found in torsion-free group rings by Gardam are twisted unitary elements. This justifies some choices in Gardam's construction that might have appeared arbitrary, and yields more examples of units. We note that all units found up to date exhibit non-trivial symmetry.
\end{abstract}
\maketitle


\section{Introduction}
In~\cite{gardam:units}, Giles Gardam came up with a remarkable unit in a torsion-free group ring, providing a counterexample to a (strong form of a) 80-year old conjecture by Kaplansky~\cite{kaplansky:problemrings,kaplansky:problemrings2}. This was later generalized by Alan Murray~\cite{murray:units} to a family of counterexamples in arbitrary non-zero characteristic.

Even though the problem in characteristic $0$ remains open, these results motivated a variety of researchers in understanding the properties of these units and their possible extensions and generalizations.\footnote{Gardam's proof has even been formalized in Lean:\\
  {\scriptsize\url{https://github.com/todbeibrot/counter-example-to-the-unit-conjecture-for-group-rings}},\\
    {\scriptsize\url{https://github.com/siddhartha-gadgil/Polylean/blob/unit-conjecture}}.}

We concentrate on a specific group $P$, the ``Hantsche-Wendt group'', which is the one in which non-trivial units were found by Gardam and followers. Our main results are:
\begin{itemize}
\item Up to applying endomorphisms and translations, all known units are \emph{unitary} and \emph{symmetric};
\item In odd characteristic a single orbit of units is known;
\item In characteristic $2$ there are at least two orbits of units, one generated by Gardam's unit and one generated by a new unit.
\end{itemize}

\subsection{Acknowledgments}
I am very grateful to Giles Gardam for generous suggestions and feedback.

\section{The group and its group ring}
The group considered in all examples is the ``Hantzsche-Wendt manifold group'', the only three-dimensional Bieberbach group\footnote{Namely, torsion-free crystallographic; that is, fundamental group of a flat manifold.} with trivial centre and point group $C_2\times C_2$, and therefore the first potentially interesting group for this problem. The group is
\[P=\langle a,b\mid a^{2b}=a^{-2},b^{2a}=b^{-2}\rangle,\]
with index-$4$ abelian subgroup $\langle x=a^2,y=b^2,z=(ab)^2\rangle\cong\Z^3$.

Let $\Bbbk$ be a commutative ring, and consider the group ring $\Bbbk P$. Recall that $\Bbbk P$ is a $*$-algebra for the anti-involution
\[u=\sum_{g\in P} n_g g\mapsto u^*\coloneqq\sum_{g\in P} n_g g^{-1}.\]
We shall consider a version of this anti-involution, twisted by automorphisms. The outer automorphism group of $P$ was computed in~\cite{zimmermann:hw}; it consists of $96$ elements, with remarkably few of these lifting to finite-order automorphisms of $P$. There are, nevertheless, two order-$2$ automorphisms $\sigma,\tau$ of $P$ that shall play an important role for us, given respectively by
\[a^\sigma=a,\quad b^\sigma=b^{-1},\qquad a^\tau=a^{-1},\quad b^\tau=x^2b^{-1}.\]
Consider also the multiplicative character $\chi\colon P\to\{\pm1\}$ given by
\[a^\chi=-1,\quad b^\chi=-1.\]
Together, $\sigma$ and $\chi$ induce an automorphism $\theta$ of the group ring $\Bbbk P$ by
\[u\sum_{g\in P} n_g g\mapsto u^\theta\coloneqq\sum_{g\in P}g^\chi n_g g^\sigma.\]
Similarly, $\tau$ induces an automorphism of $\Bbbk P$.
(Other choices of automorphisms are possible, but these are those that are closest to Murray's example, as we shall see below.)

Note that $\theta$ commutes with $*$ and $\theta^2=1$, so $u\mapsto u^{*\theta}$ is again an anti-involution. By definition, an element $u\in \Bbbk P$ shall be called \emph{$\theta$-unitary} if it satisfies $u^{*\theta}u=1$. Note that every $g\in\operatorname{Fix}(\sigma)\cap\ker(\chi)$ is $\theta$-unitary; these are \emph{trivial} unitaries. Our main remark is:
\begin{mainthm}
  If $\Bbbk$ is a field of non-zero characteristic, then there are non-trivial $\theta$-unitary units in $\Bbbk P$. Furthermore, these units may be chosen to be invariant under $\tau$.
\end{mainthm}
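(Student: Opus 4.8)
The plan is to exhibit the required units explicitly, obtaining them from the known ones by applying a suitable endomorphism of $P$ and then a translation (multiplication by an element of $P$), and afterwards to verify the two defining relations by a finite computation. Recall that a $\theta$-unitary unit is exactly a unit $u$ with $u^{-1}=u^{*\theta}$; since the inverses of Gardam's unit $p$ and of Murray's units are written down explicitly in \cite{gardam:units} and \cite{murray:units}, the task is to locate, in the orbit of a known unit under endomorphisms and translations, an element $u$ on which the anti-involution $v\mapsto v^{*\theta}$ acts by inversion and which is moreover fixed by $\tau$.

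What makes this tractable is that all of these units are supported on the four cosets of the abelian translation subgroup $A=\langle x,y,z\rangle\cong\Z^3$, so that $u=\sum_{g\in\{1,a,b,ab\}}u_g\,g$ with $u_g\in\Bbbk A$. The anti-involution $*$ and the automorphisms $\sigma$ and $\tau$ all act trivially on the point group $P/A\cong C_2\times C_2$, hence preserve this coset decomposition; so $v\mapsto v^{*\theta}$ and $v\mapsto v^\tau$ act on $\Bbbk A\cong\Bbbk[x^{\pm1},y^{\pm1},z^{\pm1}]$ coset by coset, through monomial substitutions possibly twisted by signs coming from $\chi$. Consequently the equations $u^{*\theta}u=1$ and $u^\tau=u$ reduce to finitely many Laurent-polynomial identities in three variables, which are checkable directly or symbolically. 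I would also record the one-line fact that $\sigma$, $\chi$, and hence $\theta$, commute with $\tau$ on the generators $a,b$, so that $\tau$ permutes the set of $\theta$-unitary units; asking for a $\tau$-invariant one is then asking for a fixed point of this action within a single known orbit.

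Executing the plan, I would compare $u^{*\theta}$, for $u$ running over translates and endomorphic images of $p$ (respectively of Murray's $p_n$), against the explicitly known inverse $u^{-1}$; this matching is what pins down the correct endomorphism and translations. Since the twist $\theta$ was chosen to be close to Murray's construction, I expect the odd-characteristic case to come out fairly directly from Murray's formulas once the normalization is fixed, with Gardam's unit arising as the analogous normalization in characteristic $2$; the real content is then to check that one and the same normalization is also $\tau$-invariant. Non-triviality of the resulting $u$ is then immediate: its support has more than one element, so $u$ is not $\pm g$ for any $g\in P$, and in particular is not a trivial $\theta$-unitary.

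The main obstacle I anticipate is bookkeeping rather than conceptual difficulty: one must get the $\chi$-twist and the $\sigma$-substitution exactly right — the signs matter, since $\theta$-unitarity is destroyed by rescaling by $-1$ — choose the translations on the correct sides together with the correct endomorphism, and, in odd characteristic, carry Murray's entire family through the argument uniformly instead of one prime at a time. The computation has to be organized so as to stay finite and transparent, but none of its individual steps is hard.
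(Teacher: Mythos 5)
Your proposal is essentially the paper's own proof: the paper takes Murray's unit (with $t=0$, $w=1$), normalizes it by left-multiplying by $az$ and right-multiplying by $b^{-1}$, and then verifies $u^{*\theta}u=1$ and $u^\tau=u$ by exactly the coset-by-coset Laurent-polynomial computation you describe, using the decomposition $u=1+p+qa+rb+sab$ and the explicit monomial-substitution formulas for $*$, $\theta$, $\tau$ on each coset. The only cosmetic difference is that the paper needs no endomorphism of $P$, only the two translations (the freedom in Murray's parameters $t,w$ already accounts for the automorphism part), and it observes that matching against Gardam's posited inverse is precisely what fixes the normalization.
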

\begin{proof}
  Let $p\ge2$ be the characteristic of $\Bbbk$. We consider the unit given by Murray~\cite{murray:units}*{Theorem~3}, making the choice $t=0,w=1$, left-multiplying by $a z$, and right-multiplying by $b^{-1}$: it is
  \begin{multline*}
    u = 1 + (1-z^{-1})^{p-2}\Big((4+x+x^{-1}+y+y^{-1}) + \big((1+y^{-1})(x^{-1}+y)+(1+x^{-1})(1+z^{-1})\big) a\\
    + \big((1+x^{-1})(x+y^{-1})+(1+y^{-1})(1+z)\big)b
    + (1+x^{-1})(1+y)z^{-1}(1+z^{-1})a b\Big);
  \end{multline*}
  and a tedious but straightforward calculation shows $u^{*\theta}u=1$ and $u^\tau=u$. In more details: write $u=1+p+qa+rb+sab$; and for a polynomial $p(x,y,z)$ write $p_x=p(x^{-1},y,z)$ and similarly for the other variables. Then
  \begin{align*}
    u^* &= p_{xyz} + a^{-1}q_{xyz} + b^{-1}r_{xyz} + (ab)^{-1}s_{xyz}\\
        &= p_{xyz} + x^{-1}q_x a + y^{-1}r_y b + z^{-1}s_z ab,\\
    u^\theta &= p_y + q_y a - y^{-1}r_y b - y s_y ab,\\
    u^\tau &= p_{xy} + q_{xy}x^{-1}a + r_{xy}y^{-1}b + s_{xy}x^{-1}y ab.
  \end{align*}
  These calculations are really a reformulation of the seemingly-arbitrary choices made by Gardam.\footnote{He writes $u=p+q a+r b+s ab$ for Laurent polynomials $p(x,y,z),\dots$ and posits an inverse of the form $u'=x^{-1}p(x,y^{-1},z^{-1})-x^{-1}q a-y^{-1}r b-z^{-1}s(x,y^{-1},z^{-1})a b$. His calculations are, after the appropriate left- and right-multiplications, precisely the ones that show that $u$ is $\theta$-unitary. His special choices of $p,q,r,s$ also amount to requiring $u$ to be $\tau$-invariant.}
\end{proof}

\subsection{The unitary subgroup}
We may naturally ask ``how many non-trivial units are there in $\Bbbk P$?''. To make sense of this question, we consider the ``endomorph'' $\Aff(P)\coloneqq P\rtimes\End(P)$ consisting of maps $g\mapsto h\cdot g^\alpha$ for some $h\in P,\alpha\in\End(P)$. The automorphism group of $P$ is mentioned above, and is a finite extension of $P$; there are furthermore endomorphisms $a\mapsto a^i,b\mapsto b^j$ for any odd $i,j$. Together with $P$, they generate the affine monoid of $P$. Furthermore, the affine group of $\Bbbk$, consisting of maps $x\mapsto y\cdot x^\beta$ for some $y\in\Bbbk^\times$ and some field endomorphism $\beta$ of $\Bbbk$, also act on the units of $\Bbbk P$.

Thus a natural question is how many $\Aff(P)\times\Aff(\Bbbk)$-orbits are needed to generate $(\Bbbk P)^\times$.

It would be strange that $(\Bbbk P)^\times$ be generated by a single orbit, but I cannot rule this out.\footnote{A generator could be the element $u$ above; note that trivial units are obtained as the image of $u$ by constant endomorphs. It was noted by Passman~\cite{passman:units}*{Proposition~2} that the freedom afforded by the parameters $t,w$ in Murray's examples amount to applying a group automorphism (to wit, $\{a\mapsto z^{w-t}a,b\mapsto z^w b\}$).} This is not the case in characteristic $2$, where there are at least two orbits.

Indeed, over $\mathbb F_2$, I have found, by computer search, two other units, supported respectively on $57$ and $67$ elements; let us call them $u_{57}$ and $u_{67}$, reserving the name $u_{21}$ for Gardam's unit. They are also  $\theta$-unitary and $\tau$-symmetric. Here they are:
\begin{align*}
  u_{57} &= \scriptscriptstyle\big(xyz^{-1}+x^{-1}z^{-1}+xy^{-1}z^{-1}+y^{-2}+1+x^{-2}+yz^{-1}+y^{2}+y^{-1}z^{-1}+x^{-1}yz^{-1}+x^{-1}y^{-1}z^{-1}+xyz+xy^{-1}z+x^{2}+xz^{-1}+x^{-1}yz+x^{-1}y^{-1}z\big)\\
         &+ \scriptscriptstyle\big(y^{-1}z^{2}+xz+x^{-1}y^{-1}z^{2}+yz^{2}+x^{-1}yz^{2}+xy+y^{2}z+y^{-1}+x^{-1}y+x^{-1}y^{-2}z+yz+x^{-1}y^{-1}z+x^{-2}y^{-1}+1+x^{-2}z+x^{-1}\big)a\\
         &+ \scriptscriptstyle\big(y^{-1}z+x^{-1}y^{-1}+xy^{-1}z+x^{-2}y^{-1}+x^{2}+x^{-1}z+z+x^{-1}y^{-1}z^{-1}+x+xyz+xy^{-1}z^{-1}+x^{-1}z^{-1}+y+x^{-1}y^{-2}z+xz^{-1}+y^{-2}\big)b\\
         &+ \scriptscriptstyle\big(x^{-1}z^{-2}+yz^{-2}+z^{-2}+x^{-1}yz^{-3}+x^{-1}z^{-3}+yz^{-3}+z^{-3}+x^{-1}yz^{-2}\big)ab,\\
  u_{67} &= \scriptscriptstyle\big(xyz^{-1}+xy^{-1}z^{-1}+y^{-2}+1+xy+xz+xy^{-1}+y^{2}+y^{-1}z^{-1}+yz^{-1}+x^{-1}yz^{-1}+x^{-1}y^{-1}z^{-1}+xyz+x^{-1}z+x^{-1}y+xy^{-1}z+x^{-1}y^{-1}+x^{-1}yz+x^{-1}y^{-1}z\big)\\
         &+ \scriptscriptstyle\big(y^{-2}+xy^{-1}+xz+y^{-1}+z+y+x^{-1}y^{-1}+x^{-1}z+x^{-2}yz+x^{-1}y+xy^{-1}z+z^{-1}+x^{-1}z^{-1}+yz+x^{-1}y^{-1}z+x^{-2}+y^{-1}z^{-1}+x^{-1}y^{-1}z^{-1}+yz^{-1}+x+x^{-1}yz^{-1}+x^{-2}z+x^{-1}y^{2}+x^{-2}y\big)a\\
         &+ \scriptscriptstyle\big(xy^{-1}z+x^{-1}z+xz+xy^{-2}z^{-1}+y+x^{-1}yz^{-1}+x^{-1}y^{-1}z+y^{-2}\big)b\\
         &+ \scriptscriptstyle\big(z^{-1}+xz^{-1}+xyz^{-2}+x^{-2}yz^{-1}+xz^{-2}+x^{-1}yz^{-3}+x^{-2}z^{-1}+x^{-1}yz^{-1}+x^{-2}yz^{-2}+x^{-1}z^{-3}+yz^{-3}+x^{-1}z^{-1}+yz^{-1}+x^{-2}z^{-2}+z^{-3}+xyz^{-1}\big)ab.
\end{align*}

There is a natural map $\pi\colon P\to D_\infty=\langle a,b\mid b^2,a^b=a^{-1}\rangle$, given by $a\mapsto a,b\mapsto b$. Note that $\theta$ induces the identity map on $\mathbb F_2[D_\infty]$, and that $\tau$ induce the map $a\mapsto a^{-1},b\mapsto b$. Thus every unit $u$ of $\mathbb F_2[P]$ that is $\theta$-unitary or $\tau$-symmetric induces a unit $u^\pi$ of $\mathbb F_2[D_\infty]$ that is unitary.

The units of $\mathbb F_2[D_\infty]$ were computed by Mirowicz~\cite{mirowicz:units}; let us recall the result, restricting to unitary units. They are precisely of the form
\[\varepsilon_{i,j}=b^j + (a^{-i} + a^i)(1+b),\]
for $i\in\N$ and $j\in\{0,1\}$, and generate an elementary abelian group of countably infinite rank. Now a direct calculation gives
\begin{align*}
  u_{21}^\pi &= \varepsilon_{2,0},\\
  u_{57}^\pi &= \varepsilon_{0,0}+\varepsilon_{2,0}+\varepsilon_{4,0},\\
  u_{67}^\pi &= \varepsilon_{0,0}+\varepsilon_{2,0}+\varepsilon_{4,0}.
\end{align*}
Now the action of the affine semigroup of $P$ is as follows: automorphisms and translations (when they preserve $\theta$-unitarity and $\tau$-symmetry) preserve $\varepsilon_{i,j}$; while the endomorphism $a\mapsto a^{2k+1},b\mapsto b^{2\ell+1}$ maps $\varepsilon_{i,j}$ to $\varepsilon_{(2k+1)i,j}$. Therefore, the orbit of $u_{21}$ maps in the subgroup $\langle 1,b,\varepsilon_{4i+2,j}\rangle$ of $\mathbb F_2[D_\infty]$ so it does not contain $u_{57}$. All in all, the $\theta$-unitary, $\tau$-symmetric units of $\mathbb F_2[P]$ map under $\pi$ to a subgroup of $\mathbb F_2[D_\infty]^\times$ containing at least $\langle\varepsilon_{2i,j}:i\in\N,j\in\{0,1\}\rangle$.

\subsection{A matrix representation}
Since $P$ admits an index-$4$ abelian subgroup, $\Bbbk P$ admits a faithful representation $\rho$ by $4\times4$-matrices over $\Bbbk[x^{\pm1},y^{\pm1},z^{\pm1}]$ given by
\[a\mapsto\begin{pmatrix}
            0 & 1 & 0 & 0\\
            x & 0 & 0 & 0\\
            0 & 0 & 0 & x^{-1}z^{-1}\\
            0 & 0 & y^{-1}z & 0
          \end{pmatrix},\quad
  b\mapsto\begin{pmatrix}
            0 & 0 & 1 & 0\\
            0 & 0 & 0 & 1\\
            y & 0 & 0 & 0\\
            0 & y^{-1} & 0 & 0
          \end{pmatrix}.
\]
Craven and Pappas noted in~\cite{craven-pappas:unit}*{Theorem~6.8} that every unit of $\Bbbk P$ has determinant in $\Bbbk^\times$, essentially because $P$ has trivial centre. It is immediate that $(u^*)^\rho=(u^\rho)^*$ for the anti-involution $*$ on matrices given by transposing and inverting $x,y,z$; and that the involution $\theta$ may be realized in the matrix representation as conjugation by the diagonal matrix $[1;1;-y^{-1};-y]$ followed by inverting $y$. Thus $u^\rho$ is also twisted-unitary, and this gives an alternate reason for its determinant to be in $\Bbbk^\times$.

\section{Final remarks}

Gardam arrived at his example by performing an extensive computer search. Selecting small subsets $S,T$ of $P$, one considers variables $(u_s)_{s\in S},(v_t)_{t\in T}$, their products $w_{s,t}=u_s v_t$, and solves $(\sum_{s t=g}w_{s,t}=[g=1])_{g\in ST}$. This is particularly efficiently implementable if $\Bbbk=\mathbb F_2$, in which case the $u_s,v_t,w_{s,t}$ are boolean variables with $w_{s,t}=u_s\wedge v_t$; this system can then be fed to a SAT solver.

This system is barely solvable with currently-available (laptop) solvers; for example, selecting for $S=T=B_4$ the ball of radius $4$ in $\{a,b,ab\}^{\pm1}$ does not find a solution in reasonable time. However, searching for a $\theta$-unitary example \emph{does} provide a solution over $\mathbb F_2$ without needing to guess a specific form of the supports $S,T=S^{-\theta}$: it suceeds in a few minutes with $S=B_4$.

There is a moral behind this: searching for a twisted-unitary unit halves the number of unknowns (so in exponential-time-search square-roots the running time, whatever that means); but if $u$ is $\theta$-unitary then $w=u^{*\theta}u=1$ is automatically $\theta$-self-ajoint ($w^{*\theta}=w$) so the number of constraints is also halved, and the likelihood of finding a solution (again, whatever that means) does not drop significantly.

There are further alternative ways of trimming down the search space, and requiring a solution to be $\tau$-invariant is one. I would have preferred to find even more invariant solutions, for example under the order-$6$ automorphism $\{a\mapsto b,b\mapsto a b\}$, but did not succeed, or at least its square, but did not succeed. I have also not been able to find unitary solutions, or twisted-unitary solutions for other automorphisms than $\theta$.

We finally note that the unit conjecture for $\Z P$ remains open. However,
\begin{prop}
  For every $n\in\N$ there exist elements $u,u'\in\Z P$ with
  \[u' u\equiv1\pmod{n\Z P}.\]
\end{prop}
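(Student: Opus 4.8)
The plan is to lift the positive-characteristic units produced above to the rings $\Z/p^k\Z$ by a Neumann-series (Hensel-type) argument, and then to assemble a solution for an arbitrary modulus by the Chinese remainder theorem. I may assume $n\ge2$, and I factor $n=\prod_i p_i^{k_i}$. Since $\Z P/n\Z P=(\Z/n\Z)P$, and the Chinese remainder isomorphism $\Z/n\Z\cong\prod_i\Z/p_i^{k_i}\Z$ is compatible with convolution, it induces a ring isomorphism $(\Z/n\Z)P\cong\prod_i(\Z/p_i^{k_i}\Z)P$. So it is enough to find, for each prime power $q=p^k$, a pair $u_q,u_q'\in(\Z/q\Z)P$ with $u_q'u_q=1$; gluing these across the isomorphism and then lifting the finitely many coefficients to $\Z$ gives $u,u'\in\Z P$ with $u'u\equiv1\pmod{n\Z P}$.

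For a fixed prime $p$, I start from the unit $v\in\mathbb F_pP$ of the Theorem above (Gardam's $u_{21}$ when $p=2$, Murray's unit otherwise); its coefficients are integers, so it lifts to $\hat v\in\Z P$. Because both the map $*\theta$ and reduction modulo $p$ act coefficientwise — the former by a permutation with signs, the latter entrywise — they commute, so the identity $v^{*\theta}v=1$ in $\mathbb F_pP$ becomes a congruence in $\Z P$, say $\hat v^{*\theta}\hat v=1+pc$ with $c\in\Z P$. Working now in $(\Z/p^k\Z)P$, the element $pc$ is nilpotent since $(pc)^k=p^kc^k=0$, so $1+pc$ is a unit with inverse $\sum_{i=0}^{k-1}(-pc)^i$. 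Hence $u_q:=\hat v$ and $u_q':=\bigl(\sum_{i=0}^{k-1}(-pc)^i\bigr)\hat v^{*\theta}$ satisfy $u_q'u_q=1$ in $(\Z/p^k\Z)P$. The statement asks only for a one-sided inverse, so this suffices; I would also remark that, $P$ being amenable and hence sofic, $\mathbb F_pP$ is directly finite, which makes $u_q$ a genuine two-sided unit modulo $p^k$.

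I do not expect a genuine obstacle. The one substantive point is the bookkeeping remark that Murray's identity $v^{*\theta}v=1$, when read in $\Z P$ instead of $\mathbb F_pP$, is only a congruence modulo $p$ — precisely the assertion that these elements are units in characteristic $p$ but (conjecturally) not in $\Z P$ — and this is exactly what lets the Neumann series terminate modulo $p^k$. The remaining verifications — that the Chinese remainder isomorphism respects the group-ring product, and that lifting coefficients from $\Z/n\Z$ to $\Z$ preserves the congruence $u'u\equiv1$ — are routine.
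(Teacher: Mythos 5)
Your proof is correct, but it takes a genuinely different route through the $p$-power tower than the paper does. Both arguments start from the mod-$p$ units supplied by the Theorem and both invoke the Chinese Remainder Theorem (the paper uses it only to combine distinct primes, you use it to reduce the whole problem to prime powers); the difference is the lifting step. You lift $v^{*\theta}v=1$ to $\hat v^{*\theta}\hat v=1+pc$ in $\Z P$ and invert $1+pc$ in $(\Z/p^k\Z)P$ by the terminating Neumann series $\sum_{i=0}^{k-1}(-pc)^i$, which is legitimate since $pc$ is nilpotent there; this produces a closed-form one-sided inverse in a single stroke. The paper instead climbs one power of $p$ at a time: writing $u_0'u_0=1+nw$ and seeking corrections $u=u_0+nv$, $u'=u_0'+nv'$, it reduces to the $\mathbb F_p$-linear equation $w+v'u_0+u_0'v\equiv0\pmod p$ and argues solvability by comparing the number of unknowns $2\#B_r$ with the number of constraints $\#B_{q+r}$ for large support radius $r$. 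Your version is shorter and arguably more robust: the paper's counting step is the delicate one, since an inhomogeneous linear system with more unknowns than equations need not be solvable (though it can be repaired directly by taking $v'=0$ and $v=-u_0w$, using $u_0'u_0\equiv1\pmod p$). What the paper's formulation buys is explicit control of the supports of the correction terms, which is precisely what matters for the discussion that follows the Proposition (whether $v,v'$ can be confined to a fixed finite set so as to pass to a $p$-adic and then integral unit, and the twisted-unitary variant posed as the concluding Problem); your Neumann series has support growing linearly in $k$ and so does not engage with that refinement, but none of it is needed for the statement as given. Your closing asides (commutation of ${*\theta}$ with reduction mod $p$, and direct finiteness of $\mathbb F_pP$ to upgrade to a two-sided inverse) are correct but not required.
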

\begin{proof}
  The solutions modulo $p$ afforded by the theorem (or the previous constructions) may be lifted to solutions modulo any product of distinct primes, by the Chinese Remainder Theorem. On the other hand, if $p$ divides $n$ then every solution modulo $n$ may be lifted to a solution modulo $p n$: given $u'_0 u_0\equiv 1\pmod n$, write $u'_0 u_0=1+nw$, and then write $u=u_0+n v$, $u'=u'_0+n v'$ for unknowns $v,v'$. Consider the resulting equation
  \[u'u\equiv u'_0u_0 + n(v'u_0+u'_0v)= 1+n(w+v'u_0+u'_0v)\pmod{p n},\]
  namely $w+v'u_0+u'_0v\equiv0\pmod p$. If the unknowns $v,v'$ are supported on a ball of radius $r$ in $P$, and $u_0,u'_0,w$ are supported on a ball of radius $q$, then this equation takes place on the ball of radius $q+r$ in $P$. It has thus $\#B_{q+r}$ constraints and $2\#B_r$ variables. Since $\#B_r$ grows as a polynomial of degree $3$ in $r$, there will exist a solution as soon as $r$ is large enough.
\end{proof}
A counterexample to the unit conjecture over $\Z$ would follow if the solutions $v,v'$ could be forced to be supported on a fixed finite subset of $P$. Indeed repeating the process would lead to a $p$-adic solution, which can then easily be brought to an algebraic, and then integer, form.

This argument naturally breaks down if the solution is furthermore required to be twisted-unitary; one obtains $w^{*\theta}=w$, and seeks $v$ with $w+v^{*\theta}u_0+u_0^{*\theta}v\equiv0\pmod p$; this system is typically overconstrained, and I have been unable to find any solution. I leave as a challenge:
\begin{mainproblem}
  Is there a non-trivial\footnote{By ``non-trivial'' one should ask for a solution that does not reduce to a trivial one modulo a prime; thus $1+2b$ is a trivial solution modulo $4$.} twisted-unitary solution $u^{*\theta}u\equiv1\pmod{4\Z P}$?
\end{mainproblem}

\begin{bibsection}
  \begin{biblist}
    \bibselect{math}
  \end{biblist}
\end{bibsection}

\end{document}